\newtheorem{theorem}{Theorem}
\begin{document}
\let\WriteBookmarks\relax
\def\floatpagepagefraction{1}
\def\textpagefraction{.001}
\shorttitle{CPTP Low Rank Method}
\shortauthors{D. Appel\"{o} and Y. Cheng}
%\begin{frontmatter}

\title[mode = title]{Kraus is King: High-order Completely Positive and Trace Preserving (CPTP) Low Rank  Method for the  Lindblad Master Equation}

%\tnotemark[1]
%\tnotetext[1]{This document is the results of the research
%  project funded by the National Science Foundation.}

\author[1]{Daniel Appel\"{o}}[orcid=0000-0002-0378-4563]
\cormark[1]
\fnmark[1]
\ead{appelo@vt.edu}

%\credit{Conceptualization of this study, Methodology, Software}

\affiliation[1]{organization={Department of Mathematics, Virginia Polytechnic Institute and State University},%Department and Organization
            addressline={225 Stanger Street}, 
            city={Blacksburg},
            postcode={24061}, 
            state={VA},
            country={USA}}

\author[1]{Yingda Cheng}[orcid=0000-0003-3229-2979]
\ead{yingda@vt.edu}

\cortext[cor1]{Corresponding author}
\fnmark[2]
%\fntext[fn1]{Supported by }
%\fntext[fn2]{Supported by }
%\nonumnote{}

\begin{abstract}
We design high order accurate methods that exploit low rank structure in the density matrix while respecting the essential structure of the Lindblad equation. Our methods preserves complete positivity and are trace preserving. 
%\phantom
\vspace{0.2cm}
%This template helps you to create a properly %formatted \LaTeX\ manuscript.
%
%\noindent\texttt{\textbackslash %begin{abstract}} \dots 
%\texttt{\textbackslash end{abstract}} and
%\verb+\begin{keyword}+ \verb+...+ %\verb+\end{keyword}+ 
%which
%contain the abstract and keywords %respectively. 
%Each keyword shall be separated by a %\verb+\sep+ command.
\end{abstract}

%\begin{graphicalabstract}
%\includegraphics{figs/cas-grabs.pdf}
%\end{graphicalabstract}

%\begin{highlights}
%\item Research highlights item 1
%\item Research highlights item 2
%\item Research highlights item 3
%\end{highlights}

\begin{keywords}
Completely positive and trace preserving, Kraus form  high order, integrating factor,  Lindblad master equation 
\end{keywords}

\maketitle

We are concerned with the evolution of a density matrix \mbox{$\rho(t) \in \mathbb{C}^{N \times N}$,} which is a unit-trace symmetric positive semi-definite (SPSD) matrix, describing the state of a quantum system (e.g. a quantum computer), where $\rho$ is governed by the Lindblad equation \cite{Lindblad76}  
\begin{align}\label{eqn::Lindblad}
   \frac{d}{dt} \rho(t) = -i(H \rho(t)-\rho(t) H) + \mathcal{L}_D \rho(t), \ \ \rho(0) = \rho_0.
    \end{align}
Here $H$ is the Hamiltonian, and $\mathcal{L}_D$ is on the form
    \begin{align}\label{eqn::LindbladDecay}
        \mathcal{L}_D \rho(t) = \sum_{\alpha} \gamma_\alpha \left(L_\alpha \rho(t) L_\alpha^
        \dag - \frac{1}{2}\left(L_\alpha^{\dag} L_\alpha \rho(t) + \rho(t)L_\alpha^{\dag} L_\alpha   \right)\right).
\end{align}
Here we exclusively consider time-independent $H$ and $L_\alpha$. 
For notational convenience, we consider the case with a single $\alpha$ and with $\gamma_\alpha = 1$, then we have  
\begin{equation}\label{eq:LindSmall}
        \frac{d}{dt} \rho(t)
         = -i(H \rho(t) - \rho(t) H) +  \left(L \rho(t) L^\dag - \frac{1}{2}\left(L^{\dag} L \rho(t) + \rho(t)L^\dag L   \right)\right).
\end{equation}
Our goal is to design high order accurate methods that exploit low rank structure in $\rho$ while respecting the essential structure of the Lindblad equation.  A defining feature of the Lindblad equation is that its evolution of $\rho$ preserves complete positivity (CP) and is trace preserving (TP) \cite{manzano2020short}. Choi's theorem (see e.g. \cite{manzano2020short}) states: {\em A linear map $\mathcal{G}$ is CP iff it can be represented as}
$$
\mathcal{G}\rho=\sum_l G_l^\dagger \rho G_l,
$$
and this provides a way to construct numerical methods that preserve CP. Let $\rho^n \approx \rho(t_n)$ be an approximation to the density matrix at time $t_n$ and let a numerical method be defined as a {\em Kraus map} $\mathcal{G}$ that takes $\rho^n$ to $\rho^{n+1}$, that is 
\[
\rho^{n+1} \equiv \mathcal{G}\rho^n=\sum_l G_l^\dagger \rho^n G_l.
\] 
Such a  method is obviously CP and preservation of trace can be achieved by normalization \mbox{$\rho^{n+1} \leftarrow \rho^{n+1}/{\rm Tr}(\rho^{n+1})$} at the end of each timestep. 

To design CPTP schemes is a non-trivial task. For example, it is known,  \cite{riesch2019analyzing}, that no explicit Runge-Kutta method is CP. Consequently, considerable effort has been spent on designing the numerical scheme that are on Kraus form. A pioneering work in this direction is \cite{Steinbach1995}, where a Taylor series expansion technique is used and explicit formulae defining schemes up to fourth order accuracy are provided. Later work  \cite{cao2021structure, wang2024simulation,ding2024simulating} are based on various techniques including the use of Duhamel's principle and principles of  stochastic unraveling. While these aforementioned schemes are of interest we remark that the techniques used when deriving them makes the construction of high order accurate schemes somewhat involved. 

Further, another property of the density matrix that is prominent in many quantum systems, in particular for the ones with low entropy, is that it can be effectively approximated by low rank techniques.  { The low rank property is expected to be relevant for ``systems that are weakly coupled to the environment, or for the early stage of the dynamics of systems initialized in a pure state'' \cite{gravina2024adaptive} (for further examples and motivation for low rankedness see  \cite{gravina2024adaptive})}. When present, low rank structure can and should be exploited to accelerate simulations and reduce memory footprint. Moreover, as shown later in this paper, the Kraus form plays well with low rank structures and operations can be performed directly on low rank Cholesky factors. In the literature, a standard approach for evolving low rank structures is the time-dependent variational principle  (TDVP) \cite{dirac1930note}. TDVP has been implemented in \cite{LeBris1,gravina2024adaptive} to approximate the time evolution of low rank density matrices. The main idea in TVDP is to evolve solution of an equation projected onto the low rank matrix manifold. However, since the tangent projection of the Lindbladian onto the low rank density matrix manifold in general may not be Lindbladian, a numerical scheme with the CP property resulting from TDVP is not possible to the best of the authors' knowledge. 

In response to these challenges we here propose a different, systematic and straightforward, approach to constructing high order CPTP low rank schemes for the Lindblad equation. Our approach uses the integrating factor (IF) method (or Lawson method \cite{lawson1967generalized}), and yields a class of high order integrators that will be CP as long as easily verifiable constraints on the entries of Butcher tableaus are met. Our schemes can be used in full or low rank form. In the low rank form we use so called low rank step truncation (also called ensemble truncation \cite{donatella2021continuous, mccaul2021fast} or eigenvalue truncation \cite{chen2021low} in the physics literature). This approach uses the truncated singular value decomposition (SVD) which, as we prove in this paper, is also a CP map. Therefore, combining the IF and step truncation,  we obtain a class of high order low rank CPTP schemes.

\subsection*{Integrating Factor Induced  CP  Scheme}
Following e.g. \cite{Steinbach1995} we rewrite (\ref{eq:LindSmall}) in terms of an effective Hamiltonian $J$ 
\begin{equation}
\label{eqn:eff_eqn}
        \frac{d}{dt} \rho(t)
         = \left(J\rho(t) + \rho(t)J^\dag \right) + L \rho(t) L^\dag \equiv \mathcal{L}_J \rho(t) +\mathcal{L}_L \rho(t),
\end{equation}
where 
$J = -i H_{\rm eff}, \   H_{\rm eff} = H + \frac{1}{2i} L^\dag L. 
$
The operator $\mathcal{L}_L $ is already on Kraus form, and hence the key to designing a CP scheme is the appropriate discretization of the operator $\mathcal{L}_J $. 

Our schemes start from rewriting (\ref{eqn:eff_eqn}), using integrating factor, as
\begin{equation} \label{eq:ode_rewrite} 
\frac{d}{dt} (e^{-\mathcal{L}_J t} \rho)=e^{-\mathcal{L}_J t} \mathcal{L}_L \rho.
\end{equation}
Note that the $e^{-\mathcal{L}_J t}$ acts on $\rho$ and $\mathcal{L}_L \rho$ which are both SPSD matrices. 

Before defining the scheme, for reasons soon to be revealed, we consider the action of $e^{\mathcal{L}_J \Delta t}$ onto a SPSD matrix $q(t)$. Such a matrix admits a Cholesky factorization $q(t)=V(t) V(t)^\dag$. After one timestep, at time $t + \Delta t$, we have  $q(t+\Delta t)=e^{\mathcal{L}_J \Delta t} q(t)$ which can also be represented as $q(t+\Delta t)=V(t+\Delta t) V(t+\Delta t)^\dag.$ Here, as a consequence of the equivalence of 
\begin{equation} \label{eq:jeqn} \frac{d}{dt} q(t)
         =  Jq(t) + q(t)J^\dag,   
\end{equation} 
and 
\begin{equation}    \label{eq:V_ode} 
\frac{d}{dt} V(t) = JV(t),
\end{equation} 
we have that $V(t+\Delta t)=U(\Delta t) V(t)$, where $U(\Delta t)$ is the exact flow operator $e^{ J \Delta t}$. Note that $q(t+\Delta t)=U(\Delta t) q(t) U(\Delta t) ^\dag,$ which is on Kraus form and thus CP. Further note that even if an approximate flow $\hat{U}(\Delta t)$ is used (for example by solving (\ref{eq:V_ode}) by a numerical scheme) the update $q(t+\Delta t)=\hat{U}(\Delta t) q(t) \hat{U}(\Delta t) ^\dag$ will still be CP.  

Before we define our method we recall the definition of an $s$-stage, $k$-th order explicit RK method for the system of ordinary differential equations
\begin{equation} \label{eq:ode_generic} 
\frac{d y(t)}{d t} = f(t,y), 
\end{equation}
characterized by a Butcher Tableau $(\bm{\mathsf{A}},\bm{\mathsf{b}},\bm{\mathsf{c}})$ using a timestep $\Delta t$. Let $y_0$ be the current solution, then the approximate solution at the the next timestep is given by 
\begin{eqnarray*}  
y_1 = y_0 + \Delta t \sum_{i=1}^s b_i f(t_0 + c_i \Delta t,y^{(i)})&&y^{(i)} = y_0 + \Delta t \sum_{j=1}^{i-1} a_{ij} f(t_0 + c_j \Delta t,y^{(j)}), \, i = 1,\ldots ,s.
\end{eqnarray*}  
Applying the RK method to \eqref{eq:ode_rewrite}   yields
\begin{eqnarray*} 
 e^{-\mathcal{L}_J \Delta t}\rho_1 =   \rho_0 + \Delta t  \sum_{i = 1}^s  b_i e^{-\mathcal{L}_J c_i \Delta t} \mathcal{L}_L \rho^{(i)}& &e^{-c_i \Delta t \mathcal{L}_J} \rho^{(i)}  = \rho_0 +\Delta t \sum_{j=1}^{i-1} a_{ij} e^{-\mathcal{L}_J c_j \Delta t} \mathcal{L}_L \rho^{(j)},\ \  i = 1,\ldots, s. 
\end{eqnarray*}
Regrouping terms and using the   flow operator $U(\cdot)$,   the  IF method to \eqref{eqn:eff_eqn} is thus given by 
\begin{eqnarray} 
&& \rho^{(i)}  = U(c_i \Delta t)
 \rho_0 U(c_i \Delta t)^\dag
  + \Delta t \sum_{j=1}^{i-1} a_{ij} U((c_i-c_j) \Delta t)
   \mathcal{L}_L \rho^{(j)} U((c_i-c_j) \Delta t)^\dag,\ \  i = 1,\ldots ,s,  \notag \\
&&\rho_1 =  U(  \Delta t)\rho_0 U(  \Delta t)^\dag + \sum_{i = 1}^s  b_i \Delta t U((1-c_i) \Delta t) \mathcal{L}_L\rho^{(i)}U((1-c_i) \Delta t)^\dag,  \label{eq:RKCP1}
\end{eqnarray}
  which is on Kraus form and is thus CP as long as the elements of $\bm{\mathsf{b}}, \bm{\mathsf{A}}$ are non-negative (this is the case for example for the classic fourth order Runge-Kutta method and all strongly stability preserving methods). To fully specify the scheme,  the flow operator $U(\tau) V$ should be defined. Beyond   the exact flow computed via the matrix exponential, we can also use a $k$-th order Taylor series method  where derivatives $\frac{d^m V}{dt^m}, m=1, \ldots k$ are computed recursively from (\ref{eq:V_ode}). The rationale for using a Taylor series method is that the overall order of accuracy is decide by the base RK scheme, thus there is no need to approximate the flow more accurately. We note that it would also be possible to approximate the flow operator with an implicit method such as the trapezoidal method.   

\subsection*{Truncated SVD is a CP Map}
The scheme \eqref{eq:RKCP1} is amenable to low rank implementation based on a low rank
 truncation of a sum of $k$ low rank matrices $\mathbf{R}=R_1R_1^\dagger+\ldots R_kR_k^\dagger$, with $R_j \in \mathbb{C}^{N \times r_j}$. Suppose $W = [R_1,\ldots,R_k]$, then $\mathbf{R}=W W^\dagger$  and the low rank truncation $\mathcal{T}_{\epsilon,r_{\rm max}}[\mathbf{R}]$ is defined as the truncated singular value decomposition (SVD) of $\mathbf{R}$ according to the rank threshold $r_{\rm max}$ and energy cutoff $\epsilon.$ In practice, this can be efficiently implemented by the following procedure. Compute the pivoted QR factorization $QR =  W \Pi$, then compute the SVD of the small matrix $R \Pi = \hat{U} \Sigma \hat{V}^\dagger$. Then $\mathcal{T}_{\epsilon,r_{\rm max}}[\mathbf{R}] =\hat{W} \hat{W}^\dag,$ where $\hat{W}=Q \hat{U}(:,1:r)\Sigma(1:r,1:r)$, $r= \min(r_{\epsilon},r_{\rm max})$, and $r_{\epsilon}$ is the smallest integer such that $ \sum_{j = r_{\epsilon}+1}^N  \sigma_{j}^2   \le \epsilon^2$.

Importantly, in the following theorem, we demonstrate that  the truncated SVD is a  CP map.
\begin{theorem} 
The truncated SVD operator $\mathcal{T}_{\epsilon,r_{\rm max}}[A],$ where $A$ is SPSD,  is on Kraus form, and thus is a CP map.
\end{theorem}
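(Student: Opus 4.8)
The plan is to show that, for SPSD input, the truncated SVD acts as a single-Kraus-operator map built from an orthogonal projector, which is the most direct route to exhibiting Kraus form. The starting observation is that when $A$ is SPSD its singular value decomposition coincides with its spectral decomposition, so I may write $A = \sum_{j=1}^N \sigma_j u_j u_j^\dagger$ with nonnegative singular values $\sigma_1 \ge \cdots \ge \sigma_N \ge 0$ and orthonormal eigenvectors $\{u_j\}$. With $r = \min(r_\epsilon, r_{\rm max})$ as in the truncation rule, retaining the $r$ dominant singular triplets gives $\mathcal{T}_{\epsilon,r_{\rm max}}[A] = \sum_{j=1}^r \sigma_j u_j u_j^\dagger$.

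Next I would introduce the orthogonal projector onto the dominant eigenspace, $P = \sum_{j=1}^r u_j u_j^\dagger = U(:,1:r)U(:,1:r)^\dagger$, and note that it is Hermitian, $P = P^\dagger$. The key step is to verify $P A P = \mathcal{T}_{\epsilon,r_{\rm max}}[A]$. This follows immediately from the orthonormality relations $u_i^\dagger u_j = \delta_{ij}$: inserting the spectral expansion of $A$ collapses the resulting triple sum down to $\sum_{j=1}^r \sigma_j u_j u_j^\dagger$, which is exactly the truncation. Because $P = P^\dagger$, the identity $\mathcal{T}_{\epsilon,r_{\rm max}}[A] = P A P = P^\dagger A P$ then exhibits the truncated SVD in Kraus form with the single Kraus operator $G_1 = P$, matching the canonical form $\mathcal{G}A = \sum_l G_l^\dagger A G_l$, so the map is CP.

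To connect the abstract argument with the implemented procedure, I would finally observe that the algorithm returns $\hat{W}\hat{W}^\dagger$ with $\hat{W} = Q\hat{U}(:,1:r)\Sigma(1:r,1:r)$, and that the columns of $Q\hat{U}$ are precisely the eigenvectors of $\mathbf{R}=WW^\dagger$ (with eigenvalues equal to the squared singular values of $W$). Hence the computed output is exactly the projection $P\mathbf{R}P$ onto the dominant-eigenspace, confirming that the projector argument describes the operation actually carried out.

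I do not anticipate a genuine technical obstacle; the crux is the structural recognition that, for SPSD matrices, the truncated SVD is nothing but compression by the dominant-eigenspace projector $P$, a Hermitian idempotent. The one point that deserves care is that $P$ depends on $A$, so Kraus form is being established pointwise, with the Kraus operator allowed to depend on the input. This is all that is needed to guarantee the output is SPSD and to preserve complete positivity within the step-truncation stage.
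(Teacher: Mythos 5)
Your proof is correct and follows essentially the same route as the paper: the paper's Kraus operator $U D U^\dagger$, with $D$ the diagonal 0/1 matrix retaining the top $r$ entries, is exactly your dominant-eigenspace projector $P$, and both arguments reduce to the identity $\mathcal{T}_{\epsilon,r_{\rm max}}[A] = P^\dagger A P$. Your additional remarks (the link to the QR-based implementation and the observation that the Kraus operator depends on $A$) are sound but not needed for the claim.
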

\begin{proof} 
Since $A$ is SPSD, its SVD has the form $A=U\Lambda U^\dagger,$ and 
$ 
\mathcal{T}_{\epsilon,r_{\rm max}}[A]=U D  \Lambda D^\dagger U^\dagger
$ where 
\mbox{$D= {\rm diag}(\underbrace{1, \ldots 1,}_r 0 \ldots, 0).$}
Therefore, 
$
\mathcal{T}_{\epsilon,r_{\rm max}}[A]=U D U^\dagger A (U D U^\dagger)^\dagger
$ is on Kraus form.
\end{proof}

\subsection*{CPTP Low Rank IF Scheme}
We are now ready to present the low rank version of the IF scheme \eqref{eq:RKCP1}. Starting from $V_0 \in \mathbb{C}^{N\times r}$, for stage $i$ out of $s$ form the tall and skinny matrix $W^{(i)}$ with columns  
\[
U(c_i \Delta t)V_0, \text{ and, when } i>1, \text{ columns } \sqrt{\Delta t a_{i,j}} U((c_i-c_j) \Delta t) \, L V^{(j)}, \ \ j = 1,\ldots,i-1.
\]
Then perform the low rank truncation on $W^{(i)}$, $\mathcal{T}_{\epsilon,r_{\rm max}}[W^{(i)}]$ to obtain $V^{(i)}.$ The solution at the next timestep is found by forming the matrix $W$ whose columns are  
\[
U(\Delta t)V_0, \  \  \  \  \sqrt{b_i\Delta t} U((1-c_i) \Delta t) L V^{(i)}, \ \ i = 1,\ldots,s. 
\]
followed by the truncation 
\[
V_1V_1^\dagger = \mathcal{T}_{\epsilon, r_{\rm max}}[W W^\dag].
\]
Finally, with the re-normalization \mbox{$\rho_1 \leftarrow \rho_1/{\rm Tr}(\rho_1)$}, this defines a class of high-order low rank CPTP schemes. The truncation can be used with a fixed user prescribed $\epsilon$ and $r_{\rm max} = \infty$, or with a user prescribed $r_{\rm max }$ and $\epsilon = 0$. In the first case a suitable choice is $\epsilon \sim \Delta t^{k+1}$ when a $k-$th order Runge-Kutta methods is used for (\ref{eq:RKCP1}). 

We emphasize the following features of the method. For the low rank scheme we only need to store the low rank factors, e.g. $V_0, V^{(i)},$ and never the full density matrix. This means that if the rank factor is low, we will achieve significantly reduced memory footprint and computational cost. We also emphasize that the trace renormalization will preserve the order of accuracy of the original discretization.

\subsection*{Alternative Truncation}
We note that when forming $W^{(i)}$ and $W$ one can carry out additional truncation with a more aggressive $\epsilon \sim \Delta t^{k}$ on the columns that have a $\mathcal{O}(\sqrt{\Delta t})$ pre-factor, prior to performing the truncation above. While the added truncation steps increases the cost for forming $W^{(i)}$ and $W$  it can be that, problems where $L$ has many terms, the additional cost is offset by the lower cost due to the smaller number of columns in $W^{(i)}$ and $W$. 
 
%!TEX root = ./main.tex
\section*{Numerical Examples}
We now present numerical examples illustrating the features of the new schemes. We denote by IF the scheme based on (\ref{eq:RKCP1}) with the flow operator $U(\cdot)$ approximated by matrix exponentiation. We denote by IF-LR and IF-LR-T the low rank implementation of the scheme based on (\ref{eq:RKCP1}) with the flow operator $U(\cdot)$ approximated by matrix exponentiation and the flow operator $U(\cdot)$ approximated by a Taylor series expansion, respectively. For all the results in this section we use the Butcher Tableau in (\ref{eq:RKCP1}) corresponding to the classic fourth order accurate Runge-Kutta method. {Note that we prefer to use non-dimensionalized quantities but that in order to repeat the numerical experiment in \cite{riesch2019analyzing} we adopt their notation, which includes physical units.}

\subsection*{Confirmation of CPTP}
In \cite{riesch2019analyzing} Riesch and Jirauschek studied CP for numerical methods applied to the Liouville-von Neumann equation. Here we repeat their numerical experiment (see, Section 4 in \cite{riesch2019analyzing}) with identical Hamiltonian and initial data, but add decoherence modeled by the two Lindblad operators
\[
L_1 = \frac{1}{\sqrt{2}}10^{5} a,\ \  L_2 = 10^{5} a^\dagger a. 
\]  
Here $a$ is the lowering matrix with elements $a_{l,l+1} = \sqrt{l}, l = 1,\ldots,5$.

In the left part of Figure \ref{fig:loss_CP} we display $\rho_{33}(t)$ computed using the IF method and the classic fourth order Runge-Kutta method, both using a timestep $\Delta t = 0.1 fs$ {(the same as in \cite{riesch2019analyzing})}. As can be seen in the inset the Runge-Kutta method does not stay CPTP while the IF  method does. In the right part of Figure \ref{fig:loss_CP} we again display $\rho_{33}(t)$ obtained with the same methods but now with a five times larger timestep. For this larger timestep, not only is the loss of CPTP is more pronounced (especially at early times), but what is also clear is that the IF method is much more accurate for this example, despite both methods being of order four. In the right subfigure in Figure \ref{fig:loss_CP} we have also included results using IF-LR-T using a sixth order accurate Taylor series, yielding results that are very similar.

\begin{figure}[htb]
\begin{center}
\includegraphics[width=0.46\textwidth,trim={0.0cm 0.0cm 0.0cm 0.0cm},clip]{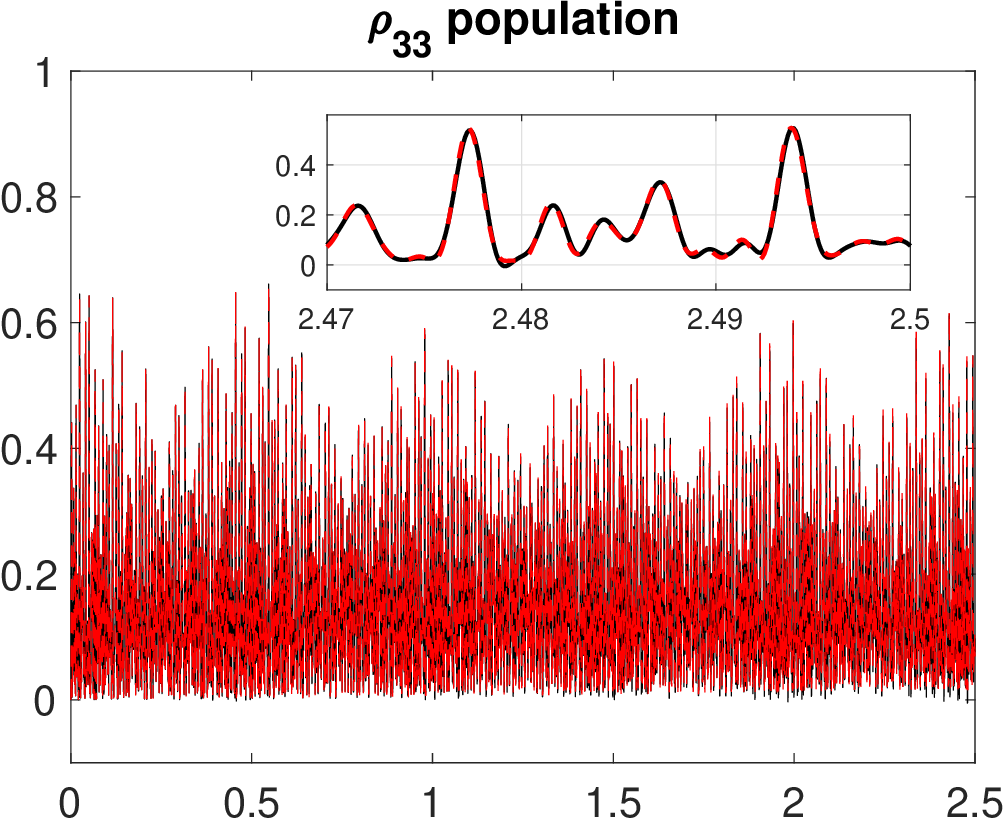}
\includegraphics[width=0.46\textwidth,trim={0.0cm 0.0cm 0.0cm 0.0cm},clip]{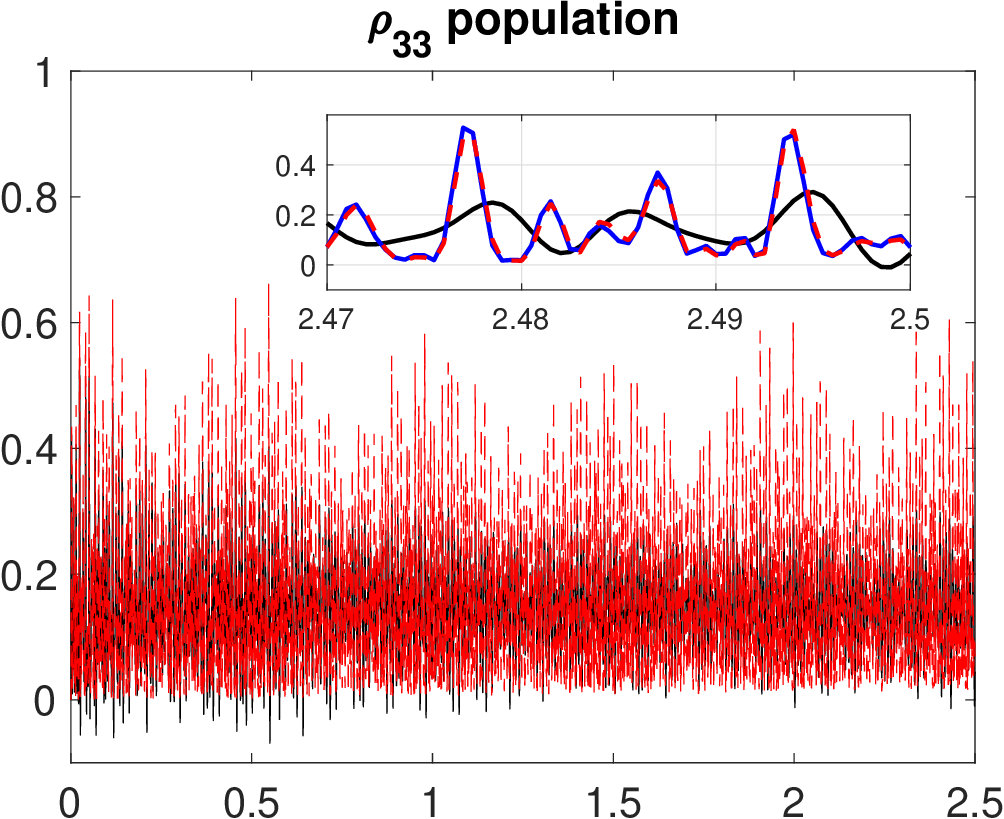}
\caption{Evolution of the population $\rho_{33}$ in the example from \cite{riesch2019analyzing}. {The units for the $x$-axis is pico seconds. The number of timesteps in the left and right insets are 300 and 60 respectively.} To the left we compare the classic RK4 method (black) and the integrating factor method with matrix exponentiation for the flow (red, dashed) using a timestep of 0.1 femto-seconds. To the right we use a five times larger timestep and have also included results when the flow is computed using a Taylor series method of order 6. \label{fig:loss_CP}}
\end{center}
\end{figure}

\subsection*{Oscillation Revival in the Jaynes-Cumming Model}
\begin{figure}[htb]
\begin{center}
\includegraphics[width=0.32\textwidth,trim={0.0cm 0.0cm 0.0cm 0.0cm},clip]{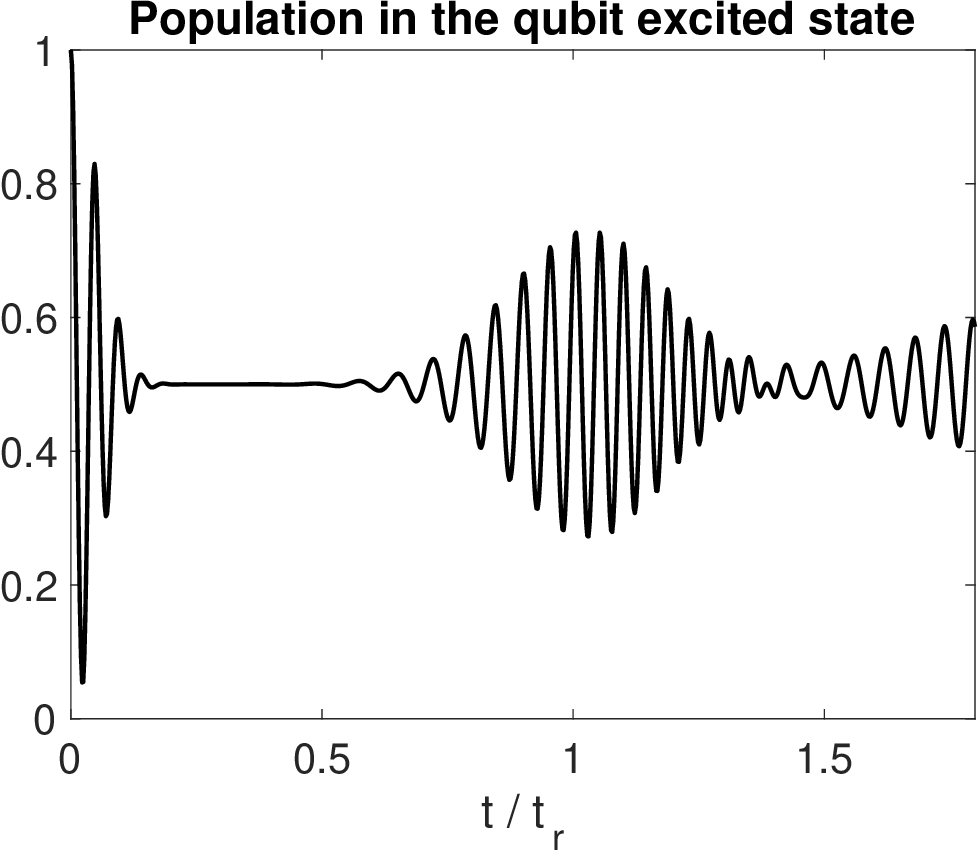}
\includegraphics[width=0.33\textwidth,trim={0.0cm 0.0cm 0.0cm 0.0cm},clip]{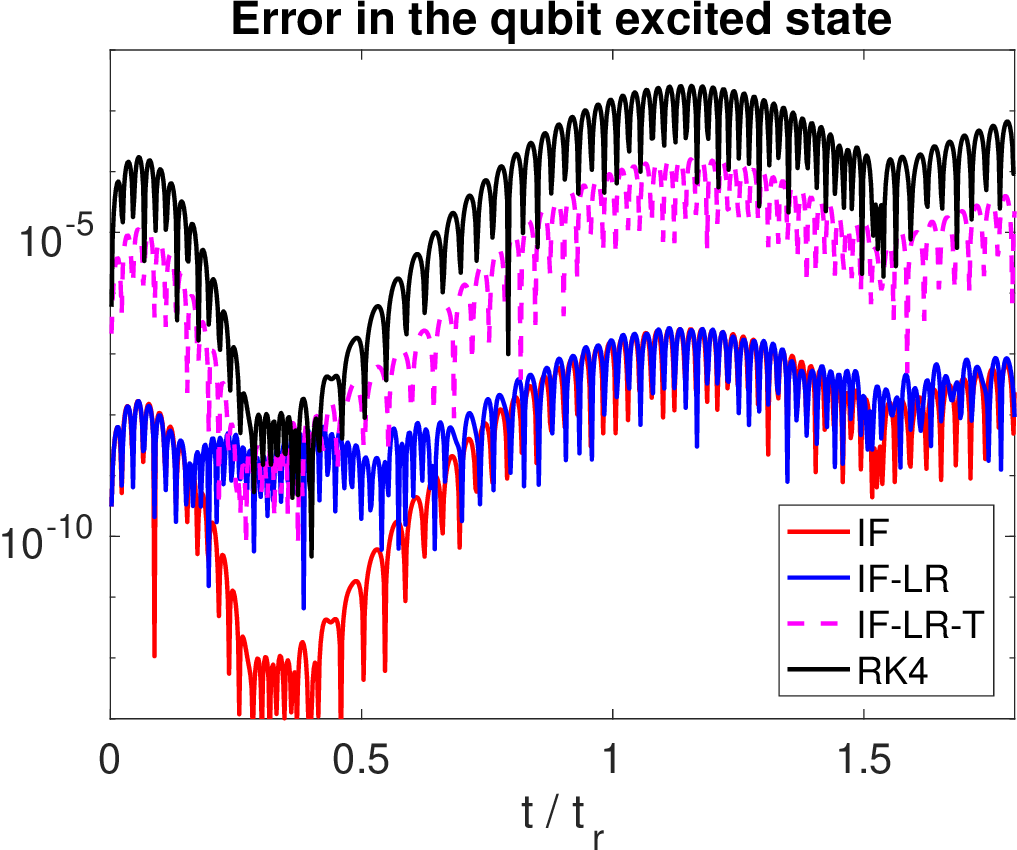}
\caption{Computation of revival for a small $m$ case using the classic fourth order accurate Runge-Kutta method, the full-rank integrating factor method and two low-rank methods (matrix exponentiation and Taylor series). See the text for explanation of the subfigures. \label{fig:revive_1}}
\end{center}
\end{figure}
In this example we consider Jaynes-Cumming model of a two level atom (qubit) interacting with a single quantized field mode (cavity) with $m$ energy levels. Under the rotating wave transformation and at exact resonance the (interaction) Hamiltonian takes the particularly simple form, 
\cite{PhysRevA.44.5913}, 
\[
H = \lambda ( b \sigma^+ + b^{\dagger} \sigma^-),
\]
and there is only one Lindblad operator $L = \sqrt{\kappa} b$. Here 
\[
b = I_{2 \times 2} \otimes \hat{b}, \ \  \sigma^+ = \begin{pmatrix}
0 & 0\\
1 & 0
\end{pmatrix} \otimes I_{m \times m}, 
\ \  \sigma^- = \begin{pmatrix}
0 & 1\\
0 & 0
\end{pmatrix} \otimes I_{m \times m}
\]
and $\hat{b}$ is the $m \times m$ lowering matrix with elements $\hat{b}_{l,l+1} = \sqrt{l}, l = 1,\ldots,m-1$. At the initial time the atom is in the excited state and the cavity is in the coherent state 
\[
{\bf v} \sim \sum_{n=0}^{m-1} \frac{|v|^n}{\sqrt{n!}} {\bf e}_n,   
\]
that is, the initial density matrix is $\rho = VV^\dagger$, with 
\[
V =  \begin{pmatrix}
0 \\
1
\end{pmatrix} \otimes \frac{{\bf v}}{\|{\bf v} \|}.
\]
Here we choose $v = \sqrt{m/3}$ so that the last terms in the sum for ${\bf v}$ are small also for moderate $m$. With these choices and for large $m$ the populations in the atom collapses to $1/2$ but revives at $t_r = 2 \pi |v| / \lambda$, see \cite{PhysRevA.44.5913}.   

 \begin{table}[] 
 \begin{center} 
\begin{tabular}{|c|c|c|c|c|c|c|c|c|c|c|c|c|} 
\hline
steps & IF & & RK & &  ME-7 && T-7&& ME-9 && T-9 &\\
\hline
\hline
 200 &  1.1(-4) &       &  3.6(-1) &       &  1.1(-4) &   &  6.1(-2) &       &  1.1(-4) &       &  6.1(-2)    & \\ 
 400 &  6.8(-6) & 4.0 &  6.3(-2) & 2.5 &  9.1(-6) &  3.6   &  4.1(-3) & 3.9 &  6.8(-6) & 4.0 &  4.1(-3) & 3.9  \\ 
 800 &  4.2(-7) & 4.0 &  4.2(-3) & 3.9 &  1.2(-5) & -0.35 &  2.6(-4) & 4.0 &  4.4(-7) & 3.9 &  2.6(-4) & 4.0  \\ 
 \hline
 \end{tabular}
 \caption{Here ME-X and T-X refers to the low rank method using matrix exponentiation and a fourth order accurate Taylor series method respectively. The -X refers to using $\epsilon = 10^{-X}$. IF refers to the method defined by (\ref{eq:RKCP1}). RK refers to the classic fourth order accurate Runge-Kutta method. Steps refers to the number of timesteps. The numbers below each method is the $L_2$ errors in time in the excited state in the qubit and the numbers to the right are the estimated orders of convergence. \label{tab:revive_1}} 
 \end{center} 
 \end{table} 
As a first example we take $m = 30$ and $\kappa = 0.001$ and simulate until time $1.8t_r$. The time evolution of the population in the excited state is displayed in Figure \ref{fig:revive_1}. To verify the order of accuracy we compare solutions using the classic fourth order Runge-Kutta method (RK4) directly for (\ref{eqn::Lindblad}) and together with the IF method. We also use the IF-LR and IF-LR-T (with a fourth order Taylor series) methods. In Table \ref{tab:revive_1} we report $L_2$ (in time) errors for the population of the excited state along with observed rates of convergence. For the low-rank schemes we use two different tolerances (see the caption of Table \ref{tab:revive_1}). We see the expected rate of convergence for all cases with the exception of the IF-LR method at the looser tolerance. The reason for this can be understood from the right subfigure in Figure \ref{fig:revive_1}, where we have displayed the errors for the 800 timestep computation with the tighter tolerance. As can be seen the methods that use matrix exponentiation are significantly more accurate. This is to be expected for problems where the Hamiltonian dominates the Lindbladian. As the error due to the timestepping is quite small for the IF-LR method, the looser tolerance represents the dominant source of error and no convergence with decreasing $\Delta t$ is observed. Our observation is that the results in Figure \ref{fig:revive_1} are typical, with the traditional RK method being least accurate and the full-rank IF method being the most accurate. The accuracy of the, considerably more efficient, low-rank methods are in between.  

Instead of choosing the tolerance as a fixed number it can be chosen as $\epsilon = \Delta t^q$. In Figure  \ref{fig:revive_1_tol_conv} we display the errors in the population of the excited state in the qubit for $q = 2,3,4,5$ and for the low-rank methods using matrix exponentiation and Taylor series for the flow. As can be seen the rate of convergence is $q-1$, showing that $\epsilon$ should be chosen to be of the same size as the local truncation error of the underlying Runge-Kutta method.    

\begin{figure}[htb]
\begin{center}
\includegraphics[width=0.40\textwidth,trim={0.0cm 0.0cm 0.0cm 0.0cm},clip]{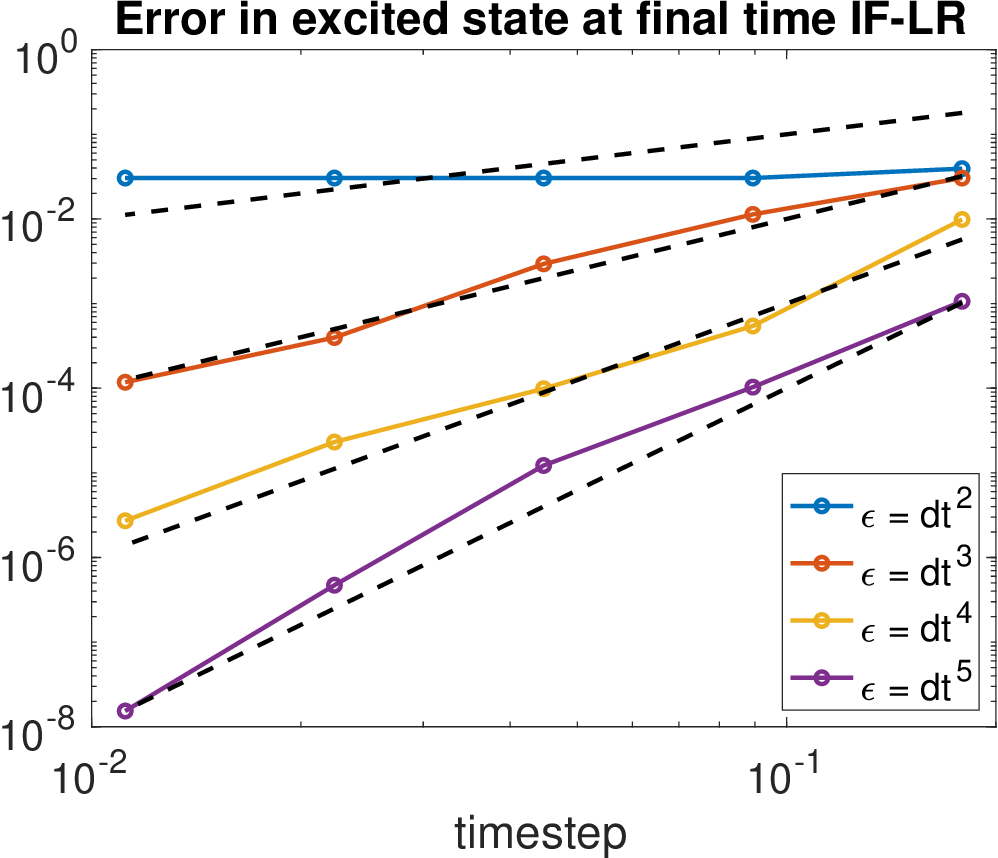} \ \ \ \ 
\includegraphics[width=0.40\textwidth,trim={0.0cm 0.0cm 0.0cm 0.0cm},clip]{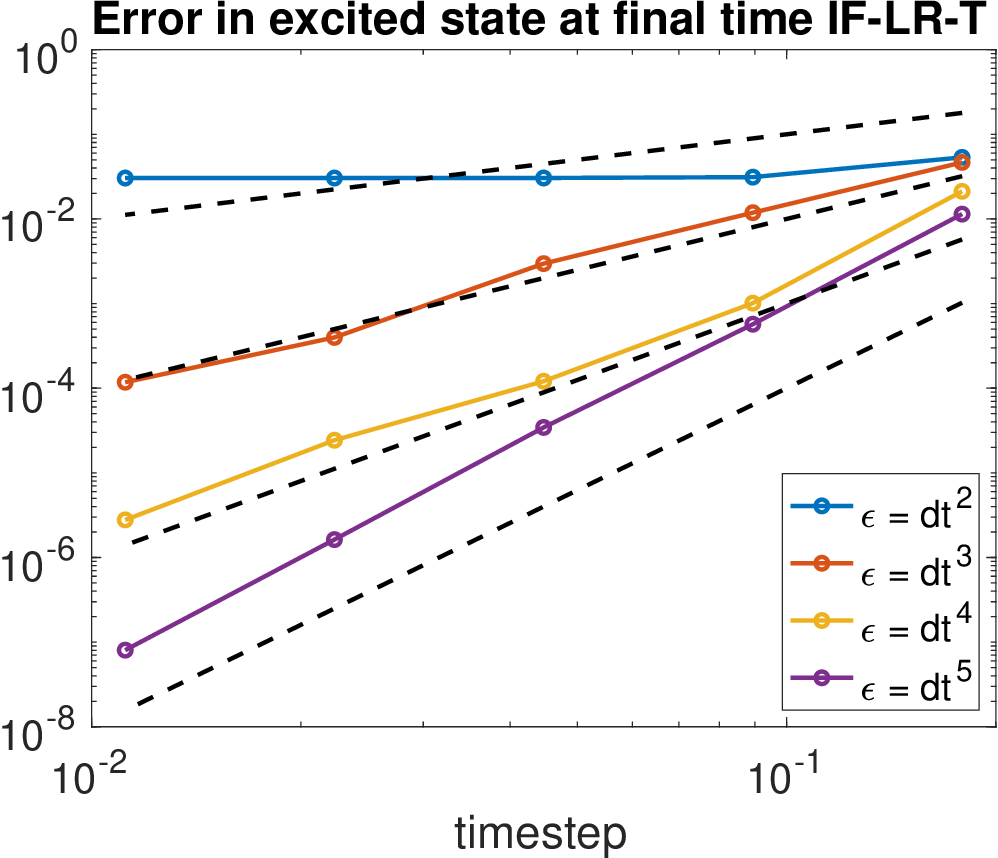}
\caption{Errors at the final time for the two low-rank methods (matrix exponentiation (left) and Taylor series (right)) as a function of the timestep. The dashed lines are orders, one to four. See the text for explanation of the subfigures. \label{fig:revive_1_tol_conv}}
\end{center}
\end{figure}

In this example we take $m = 150$, $\kappa = 0.002/9$ and simulate until time $3t_r$ using 4000 timesteps. We present results for $\epsilon = 10^{-3}, 10^{-5}, 10^{-7}$ using the low-rank method with RK4 and 4th a order Taylor series for the flow. The results are displayed in Figure \ref{fig:revive_2}. To the left we have plotted the population for the excited state (the insets are zoom-ins) for $\epsilon = 10^{-7}$. The second subfigure displays the population for the excited state for each or the tolerances (vertically offset by 1 and 2 for clarity). The computations with the largest $\epsilon$ over-predicts the amplitude of the revivals. The third figure displays the rank parameter as a function of time and for the different tolerances. As can be seen the rank parameter grows slowly even though the solution is highly oscillatory. The  rank for the computation with the largest $\epsilon$ is one for the duration of the simulation, which can be an explanation to the over-prediction of the revival. The rightmost figure displays the difference between the solutions obtained with $\epsilon = 10^{-3}, 10^{-5}$ compared to $\epsilon =  10^{-7}$. While the errors are slightly larger than $\epsilon$ it does appear that the reduction is around 100 times in-between the two cases.     

\begin{figure}[htb]
\begin{center}
\includegraphics[width=0.24\textwidth,trim={0.0cm 0.0cm 0.0cm 0.0cm},clip]{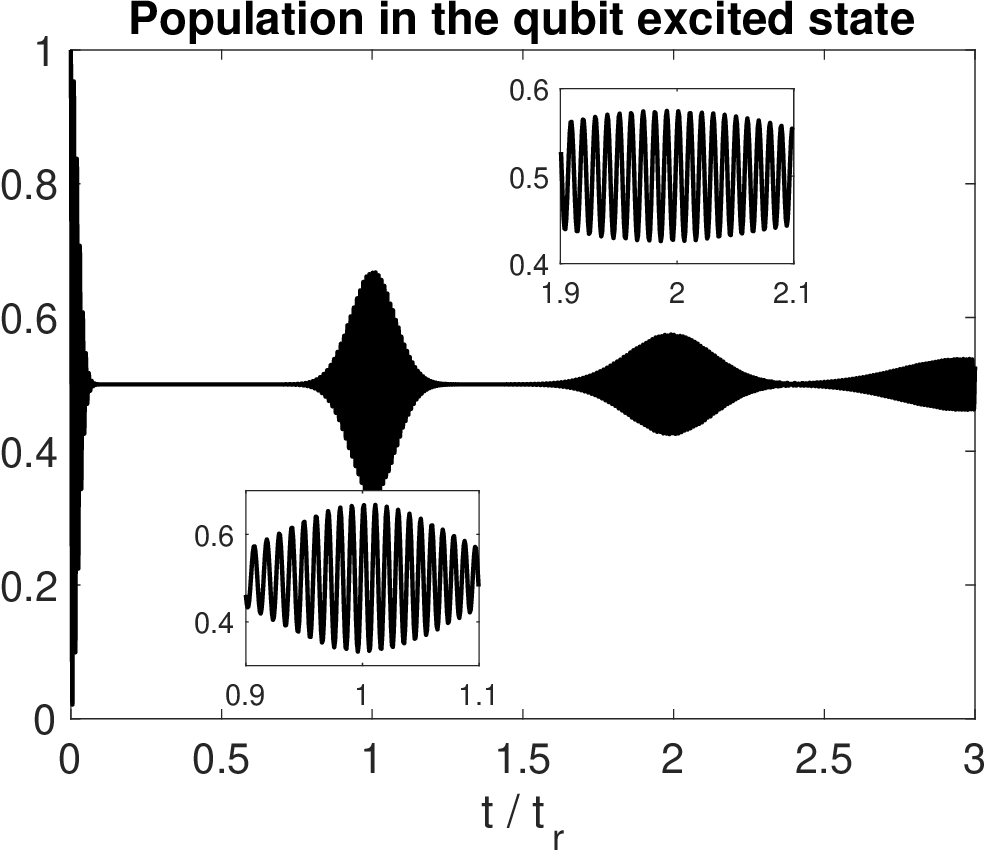}
\includegraphics[width=0.24\textwidth,trim={0.0cm 0.0cm 0.0cm 0.0cm},clip]{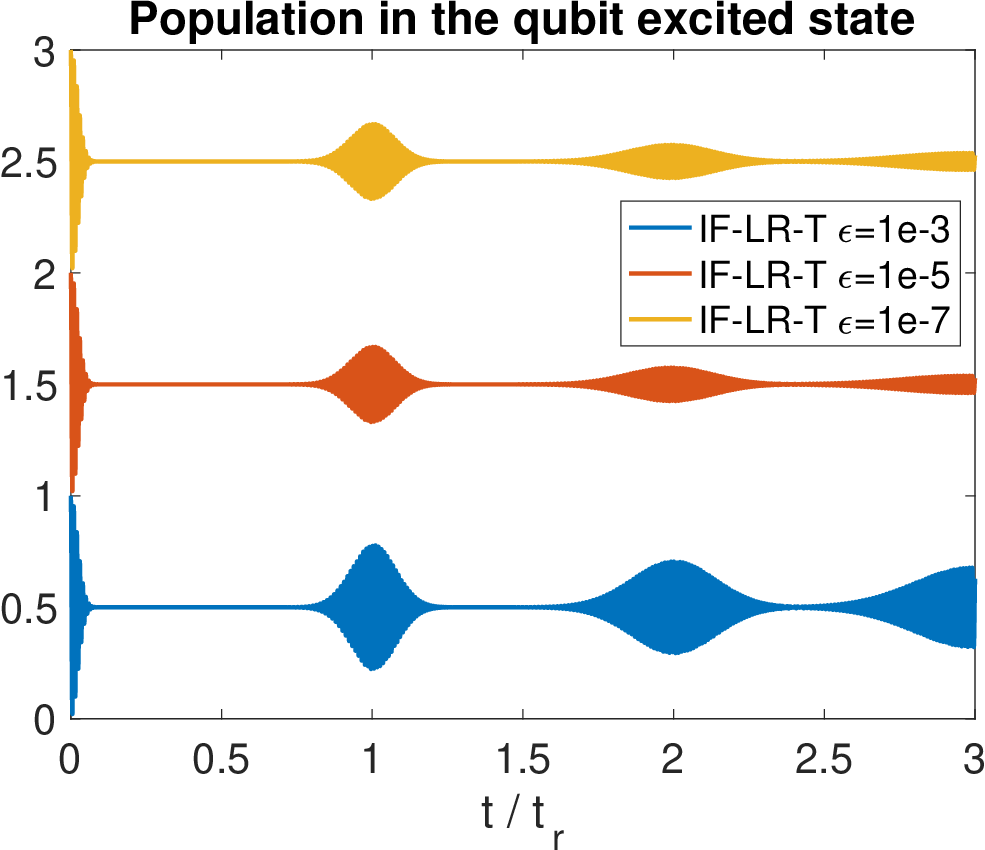}
\includegraphics[width=0.24\textwidth,trim={0.0cm 0.0cm 0.0cm 0.0cm},clip]{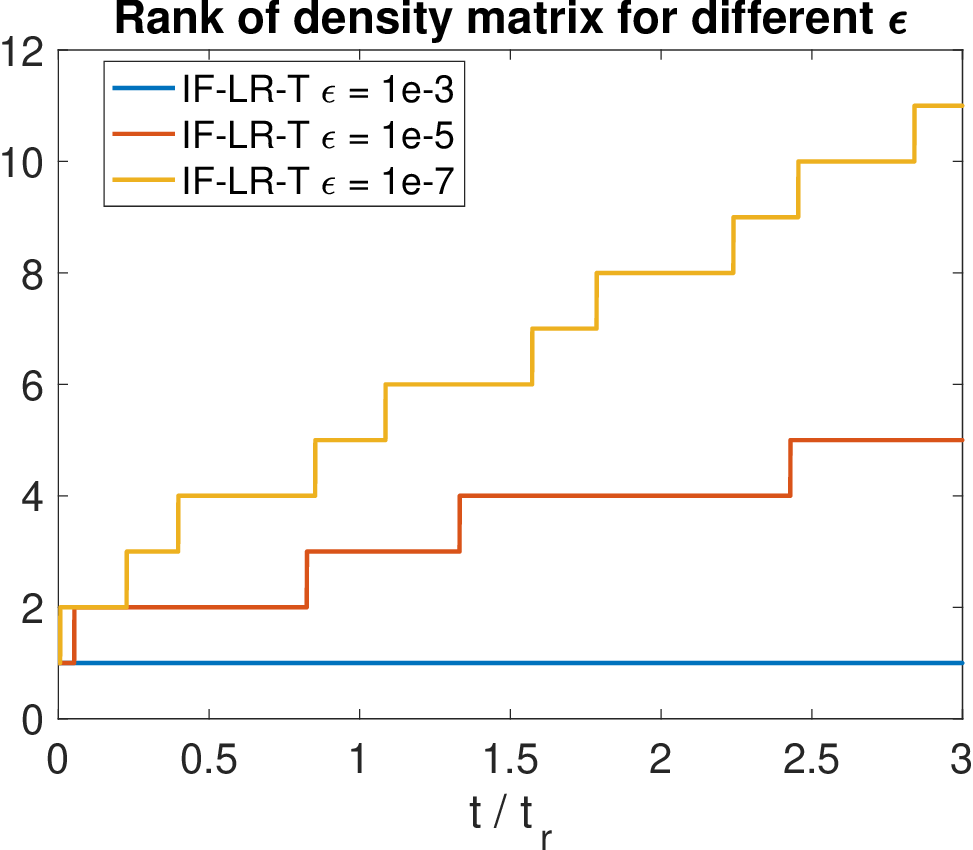}
\includegraphics[width=0.25\textwidth,trim={0.0cm 0.0cm 0.0cm 0.0cm},clip]{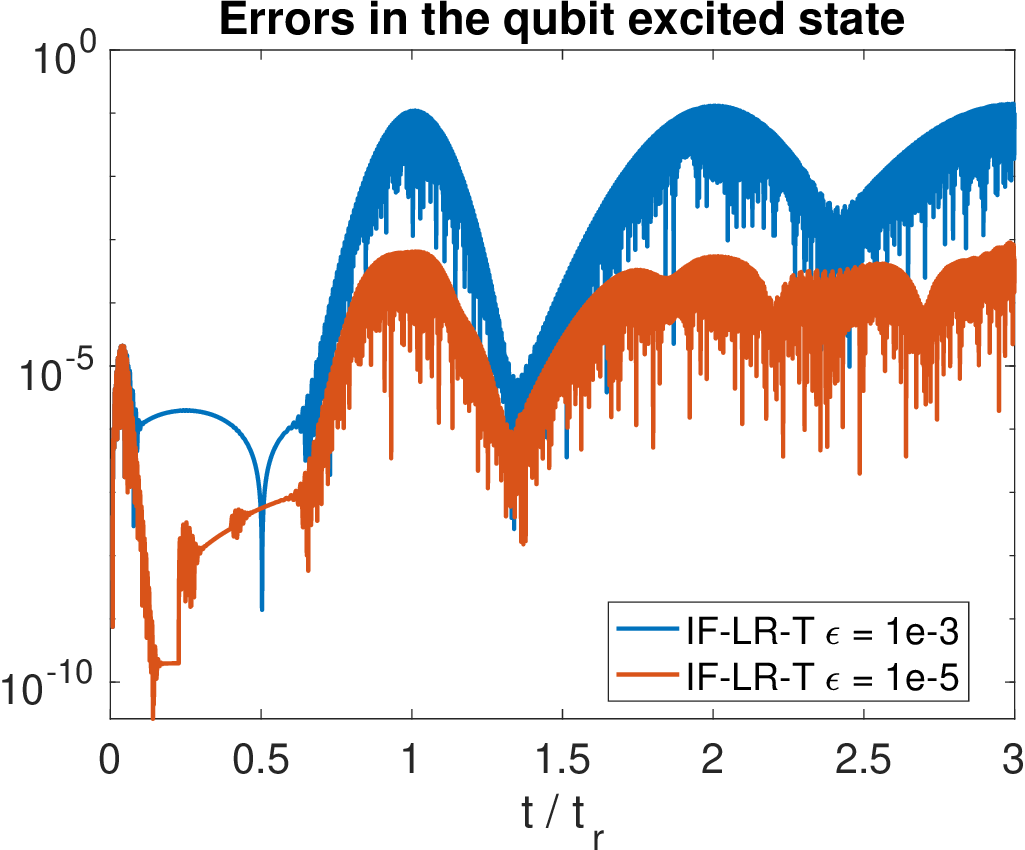}
\caption{Low-rank computation of revival for a large $m$ case. See the text for explanation of the subfigures. \label{fig:revive_2}}
\end{center}
\end{figure}

%\begin{figure}[htb]
%\begin{center}
%\includegraphics[width=0.49\textwidth,trim={0.0cm 0.0cm 0.0cm 0.0cm},clip]{}
%\includegraphics[width=0.49\textwidth,trim={0.0cm 0.0cm 0.0cm 0.0cm},clip]{}
%\caption{Low-rank computation of revival for a large $m$ case. See the text for explanation of the subfigures. \label{fig:revive_3}}
%\end{center}
%\end{figure}

In summary, this paper develops a class of high order low rank CPTP schemes for  the Lindblad equation with time-independent Hamiltonian and jump operators. We demonstrate in numerical examples that the schemes are accurate and efficient for problems with low rank properties. In subsequent work, we will consider the general case when the Hamiltonian and jump operators depend on time. We also plan to develop tensor network based low rank schemes \cite{werner2016positive,sulz2024numerical} to tackle the challenging case of many qubits. { As pointed out by one of the reviewers, the low rank techniques used here can likely be adopted to other methods, for example those in \cite{Steinbach1995,cao2021structure}.}

\subsection*{Acknowledgment and Disclaimer}
This material is based upon work supported by the U.S. Department of Energy, Office of Science, Advanced Scientific Computing Research (ASCR), under Award Number DE-SC0025424.

This report was prepared as an account of work sponsored by an agency of the United States Government. Neither the United States Government nor any agency thereof, nor any of their employees, makes any warranty, express or implied, or assumes any legal liability or responsibility for the accuracy, completeness, or usefulness of any information, apparatus, product, or process disclosed, or represents that its use would not infringe privately owned rights. Reference herein to any specific commercial product, process, or service by trade name, trademark, manufacturer, or otherwise does not necessarily constitute or imply its endorsement, recommendation, or favoring by the United States Government or any agency thereof. The views and opinions of authors expressed herein do not necessarily state or reflect those of the United States Government or any agency thereof.

This material is based upon work supported by the National Science Foundation under Grant Number 2436319. Any opinions, findings, and conclusions or recommendations expressed in this material are those of the author(s) and do not necessarily reflect the views of the National Science Foundation.

\bibliographystyle{elsarticle-num} 
\bibliography{refs}
\end{document}